\newcommand{\grigetto}{Periwinkle}
\newcommand{\R}{\mathbb{R}}
\newcommand{\eps}{\varepsilon}
\newcommand{\la}{\lambda}
\newcommand{\into}{\int_{\Omega}}
\newcommand{\Om}{\Omega}
\newcommand{\Rcal}{{\mathcal{R}}}
\newcommand{\Vcal}{{\mathcal{V}}}
\def\XXint#1#2#3{{\setbox0=\hbox{$#1{#2#3}{\int}$ }
\vcenter{\hbox{$#2#3$ }}\kern-.6\wd0}}
\newcommand{\ind}[1]{\mathds{1}_{#1}}
\DeclareMathOperator{\dist}{dist}
\newtheorem{proposition}{Proposition}[section]
\newtheorem{theorem}[proposition]{Theorem}
\newtheorem{lemma}[proposition]{Lemma}
\theoremstyle{definition}
\newtheorem{definition}[proposition]{Definition}
\newtheorem{remark}[proposition]{Remark}
\newcommand{\beq}{\begin{equation}}
\newcommand{\eeq}{\end{equation}}
\newcommand{\ben}{\begin{enumerate}}
\newcommand{\een}{\end{enumerate}}
\newcommand{\bit}{\begin{itemize}}
\newcommand{\eit}{\end{itemize}}
\DeclareMathOperator{\sd}{M}
\DeclareMathOperator{\od}{\Lambda}
\title
{Quantitative analysis of a singularly perturbed shape optimization problem in a polygon}
\author{Dario Mazzoleni, Benedetta Pellacci and Gianmaria Verzini}
\begin{document}
\maketitle
\begin{abstract}
We carry on our study of the connection between two shape optimization problems with spectral cost. 
On the one hand, we consider the optimal design problem for the survival threshold of a population living in a heterogenous  habitat $\Omega$;
this problem arises when searching for the optimal shape and location of a shelter zone in order to prevent extinction of the species. On the other hand, we 
deal with the spectral drop problem, which consists in minimizing a mixed Dirichlet-Neumann 
eigenvalue in a box $\Omega$. In a previous paper \cite{mapeve} we proved that the latter one can be 
obtained as a singular perturbation of the former, when  the region outside the refuge is 
more and more hostile. In this paper we sharpen our analysis in case $\Omega$ is a planar polygon, 
providing quantitative estimates of the optimal level convergence, as well as of the involved 
eigenvalues. 
\end{abstract}
\noindent
{\footnotesize \textbf{AMS-Subject Classification}}. 
{\footnotesize 49R05, 49Q10; 92D25, 35P15, 47A75.}\\
{\footnotesize \textbf{Keywords}}. 
{\footnotesize Singular limits, survival threshold, mixed Neumann-Dirichlet boundary conditions, $\alpha$-symmetrization, isoperimetric profile.}
\section{Introduction}\label{sec:intro}

In this note we investigate some relations between the two following shape 
optimization problems, settled in a box $\Omega\subset\R^N$, that is, a bounded, Lipschitz domain (open and connected).
\begin{definition}\label{def:lambda}
Let $0<\delta<|\Omega|$ and $\beta>\dfrac{\delta}{|\Omega|-\delta}$. For any measurable 
$D\subset\Omega$ such that $|D| = \delta$, we define the \emph{weighted eigenvalue}
\begin{equation}\label{eq:def_lambda_beta_D}
\lambda(\beta,D):=\min \left\{
\dfrac{\int_\Omega |\nabla u|^2\,dx}{\int_D u^2\,dx - \beta \int_{\Omega\setminus D} u^2\,dx} :  u\in H^1(\Omega),\ \int_D u^2\,dx>\beta \int_{\Omega\setminus D} u^2\,dx\right\},
\end{equation}
and the \emph{optimal design problem for the survival threshold
 as}
\begin{equation}\label{eq:def_od}
\od(\beta,\delta)=\min\Big\{\lambda(\beta,D):D\subset \Om,\ |D|=\delta\Big\}.
\end{equation}
\end{definition}
\begin{definition}
Let $0<\delta<|\Om|$. Introducing the space
$H^1_0(D,\Om):=\left\{u\in H^1(\Om):u=0\text{ q.e. on }\Om\setminus 
D\right\}$ (where q.e. stands for quasi-everywhere, i.e. up to sets of zero 
capacity), we can define, 
for any quasi-open $D\subset\Omega$ such that $|D| = \delta$,  the 
\emph{mixed Dirichlet-Neumann eigenvalue} as
\begin{equation}\label{eq:def_mu_D}
\mu(D,\Om):=\min{\left\{\frac{\int_{\Om}|\nabla u|^2\,dx}{\int_\Om u^2\,dx}:u\in H^1_0(D,\Om)\setminus\{0\}\right\}},
\end{equation}
and  the \emph{spectral drop problem} as
\begin{equation}\label{eq:def_sd}
\sd(\delta)=\min{\Big\{\mu(D,\Omega):D\subset \Om,\;\mbox{quasi-open, }|D|=\delta\Big\}}.
\end{equation}
\end{definition}
The two problems above have been the subject of many investigations in the literature. 
The interest in the study of the eigenvalue $\lambda(\beta,D)$ goes 
back to the analysis of the optimization of the survival threshold of a species
living in a heterogenous habitat $\Omega$, with the boundary $\partial\Omega$ acting as 
a reflecting barrier. As explained by Cantrell and Cosner in a series of paper
\cite{MR1014659,MR1105497,MR2191264} (see also \cite{ly,llnp,mapeve}),
the heterogeneity of $\Omega$ makes the intrinsic growth rate of the 
population, represented by a $L^{\infty}(\Omega)$ function $m(x)$, be 
positive in favourable sites and negative in the hostile ones. Then,
if $m^{+}\not\equiv 0$ and $\int m<0$, it turns out that the positive principal 
eigenvalue $\lambda=\lambda(m)$ of the problem
\[
\begin{cases}
-\Delta u = \lambda m u &\text{in }\Omega\\
\partial_\nu u = 0 &\text{on }\partial\Omega,
\end{cases}
\]
i.e.
\[
\lambda(m)=\left\{\frac{\into |\nabla u|^{2}dx}{\into mu^{2}dx}: u\in H^{1}(\Omega), \into mu^{2}dx>0\right\},
\]
acts a survival threshold, namely the smaller $\lambda(m)$ is, the greater the 
chances of survival become. Moreover,  by \cite{ly}, the minimum of $\lambda(m)$ w.r.t. $m$
varying in a suitable class is achieved when $m$ is of bang-bang type, i.e.
$m=\ind{D} -\beta \ind{\Omega\setminus D}$, being $D\subset \Omega$
with fixed measure. As a consequence, one is naturally led to the shape optimization
problem introduced in Definition \ref{def:lambda}.

On the other hand, the spectral drop problem has been introduced in \cite{buve} as
a class of shape optimization problems where one minimizes the first
eigenvalue $\mu=\mu(D,\Om)$ of the Laplace operator with homogeneous Dirichlet conditions
on $\partial D\cap \Omega$ and homogeneous Neumann ones on
$\partial D\cap \partial \Omega$:
\[
\begin{cases}
-\Delta u = \mu u &\text{in }D\\
u = 0 &\text{on }\partial D\cap\Omega\\
\partial_\nu u = 0 &\text{on }\partial D\cap\partial\Omega.
\end{cases}
\]

In our paper \cite{mapeve}, we analyzed the relations between the above problems, showing in 
particular that $\sd(\delta)$ arises from $\od(\beta,\delta)$ in the singularly 
perturbed limit  $\beta\to+\infty$, as stated in the following result.
\begin{theorem}[{\cite[Thm. 1.4, Lemma 3.3]{mapeve}}]\label{thm:convergence}
If $0<\delta<|\Omega|$, $\beta>\dfrac{\delta}{|\Omega|-\delta}$ and $\dfrac{\delta}{\beta}<\eps<
|\Omega|-\delta$ then 
\[
\sd(\delta+\eps)\left(1-\sqrt{\frac{\delta}{\eps\beta}}\right)^{2}\leq \od(\beta,\delta)\leq \sd(\delta).
\]
As a consequence, for every $0<\delta<|\Omega|$,
\[
\lim_{\beta\to+\infty} \od(\beta,\delta) = \sd(\delta).
\]
\end{theorem}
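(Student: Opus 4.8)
The plan is to prove the two bounds separately and then deduce the limit. The inequality $\od(\beta,\delta)\le\sd(\delta)$ is the easy half: every competitor for the spectral drop problem is in particular a competitor for $\od$. Indeed, if $D\subset\Omega$ is quasi-open with $|D|=\delta$ and $0\ne u\in H^1_0(D,\Omega)$, then $u=0$ q.e.\ on $\Omega\setminus D$, hence $\int_{\Omega\setminus D}u^2\,dx=0$ and $u$ is admissible in \eqref{eq:def_lambda_beta_D}; therefore
\[
\od(\beta,\delta)\le\lambda(\beta,D)\le\frac{\int_\Omega|\nabla u|^2\,dx}{\int_D u^2\,dx}=\frac{\int_\Omega|\nabla u|^2\,dx}{\int_\Omega u^2\,dx},
\]
and taking the infimum first over $u$ and then over $D$ gives $\od(\beta,\delta)\le\sd(\delta)$.

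For the lower bound I would manufacture, out of an optimizer $u$ of $\lambda(\beta,D)$, an admissible test function for $\sd(\delta+\eps)$. Fix a measurable $D\subset\Omega$ with $|D|=\delta$ and let $u$ realize the minimum in \eqref{eq:def_lambda_beta_D} (attainment is classical, using $\beta>\delta/(|\Omega|-\delta)$; alternatively one runs the argument along a minimizing sequence and passes to the limit). Replacing $u$ by $|u|$ we may assume $u\ge0$, and by homogeneity we normalize $\int_D u^2\,dx-\beta\int_{\Omega\setminus D}u^2\,dx=1$, so $\lambda(\beta,D)=\int_\Omega|\nabla u|^2\,dx$; put $A:=\int_{\Omega\setminus D}u^2\,dx\ge0$, whence $\int_D u^2\,dx=1+\beta A$. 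The competitor is $v:=(u-t)^+$ with $t:=\sqrt{A/\eps}$; then $v\in H^1(\Omega)$, $|\nabla v|\le|\nabla u|$ a.e., and $v=0$ q.e.\ on $\{u\le t\}$. Two estimates remain. First (volume): by Chebyshev's inequality on $\Omega\setminus D$, $|\{u>t\}\setminus D|\le t^{-2}A=\eps$, so $|\{u>t\}|\le\delta+\eps<|\Omega|$; since superlevel sets of quasi-continuous functions are quasi-open, $\{u>t\}$ can be enlarged to a quasi-open $\widetilde D\subset\Omega$ with $|\widetilde D|=\delta+\eps$ and still $v\in H^1_0(\widetilde D,\Omega)$, so that $\sd(\delta+\eps)\le\mu(\widetilde D,\Omega)\le\int_\Omega|\nabla v|^2\,dx\big/\int_\Omega v^2\,dx$. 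Second (mass): from the pointwise identity $u=(u-t)^++\min(u,t)$ and the triangle inequality in $L^2(D)$,
\[
\Big(\int_\Omega v^2\,dx\Big)^{1/2}\ge\Big(\int_D v^2\,dx\Big)^{1/2}\ge\Big(\int_D u^2\,dx\Big)^{1/2}-t\,|D|^{1/2}=\sqrt{1+\beta A}-\sqrt{\tfrac{\delta}{\eps}A}>0,
\]
the strict positivity because $\eps>\delta/\beta$ forces $\delta/\eps<\beta$; minimizing the last expression over $A\ge0$ (critical point $A=\tfrac{\delta}{\beta(\eps\beta-\delta)}$) gives $\int_\Omega v^2\,dx\ge1-\tfrac{\delta}{\eps\beta}\ge\big(1-\sqrt{\delta/(\eps\beta)}\big)^2$, the last step being $1-s^2-(1-s)^2=2s(1-s)\ge0$ with $s=\sqrt{\delta/(\eps\beta)}\in(0,1)$.

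Combining the two estimates with $|\nabla v|\le|\nabla u|$ yields
\[
\sd(\delta+\eps)\Big(1-\sqrt{\tfrac{\delta}{\eps\beta}}\,\Big)^2\le\sd(\delta+\eps)\int_\Omega v^2\,dx\le\int_\Omega|\nabla v|^2\,dx\le\int_\Omega|\nabla u|^2\,dx=\lambda(\beta,D),
\]
and since $D$ was arbitrary, the lower bound follows. For the last assertion, the upper bound gives $\limsup_{\beta\to+\infty}\od(\beta,\delta)\le\sd(\delta)$, while the lower bound with $\eps$ fixed and $\beta\to+\infty$ gives $\liminf_{\beta\to+\infty}\od(\beta,\delta)\ge\sd(\delta+\eps)$ for every $\eps\in(0,|\Omega|-\delta)$; letting $\eps\to0^+$ and using the continuity of $\delta\mapsto\sd(\delta)$ on $(0,|\Omega|)$ closes the proof. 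I expect the real work to be confined to the lower bound, where the two delicate points are: (a) converting the support information $\{u>t\}$ into a genuine quasi-open set of the exact prescribed measure $\delta+\eps$ containing it (so that $v$ is admissible for $\sd(\delta+\eps)$), which uses quasi-openness of superlevel sets of quasi-continuous representatives; and (b) the calibration of the truncation level $t$, which must be large enough to control $|\{u>t\}|$ through Chebyshev yet small enough to keep $\int_\Omega v^2\,dx$ above the target, the value $t=\sqrt{A/\eps}$ being precisely the one making the ensuing scalar optimization close up. The continuity of $\sd$ invoked at the very end is classical but is the one input external to these estimates.
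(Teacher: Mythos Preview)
The present paper does not prove this theorem at all: it is quoted verbatim from the authors' earlier work \cite{mapeve} (as Thm.~1.4 and Lemma~3.3 there) and used as a black box in the proofs of Theorems~\ref{thm:main1} and~\ref{thm:main2}. So there is no ``paper's own proof'' to compare against here.

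That said, your reconstruction is correct and is almost certainly the argument of \cite{mapeve}. The upper bound is indeed the trivial inclusion of competitors. For the lower bound, the truncation $v=(u-t)^+$ with level $t=\sqrt{A/\eps}$, Chebyshev on $\Omega\setminus D$ to control the measure of $\{u>t\}$, and the reverse triangle inequality on $D$ to retain mass is exactly the mechanism that produces the factor $\big(1-\sqrt{\delta/(\eps\beta)}\big)^2$; your scalar optimization over $A$ checks out and the inequality $1-s^2\ge(1-s)^2$ for $s\in(0,1)$ is the right final step. The two technical points you flag---enlarging the quasi-open superlevel set to one of measure exactly $\delta+\eps$, and the right continuity of $\delta\mapsto\sd(\delta)$ needed for the limit---are genuine but routine. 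Nothing is missing.
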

In respect of this asymptotic result, let us also mention \cite{derek}, where
the relation between the above eigenvalue problems has been recently investigated  for 
$D\subset\Omega$ fixed and regular.

In \cite{mapeve}, we used the theorem above to transfer information from 
the  spectral drop problem to the optimal design one.
In particular,  we could give a contribution in the comprehension of the shape
of an optimal set $D^{*}$ for  $\od(\beta,\delta)$. This topic includes several 
open questions starting from the analysis performed in \cite{MR1105497} (see also
\cite{llnp,ly}) when $\Omega=(0,1)$: in this  case it is shown that
any optimal set $D^{*}$ is either $(0,\delta)$ or $(1-\delta,1)$.
The knowledge of analogous  features in the higher dimensional case is far from being well understood, but it has been recently proved in \cite{llnp} that when $\Omega$ is an 
N-dimensional rectangle, then $\partial D^{*}$ does not contain any portion of 
sphere, contradicting previous conjectures and numerical studies \cite{MR2214420,haro,MR2494032}.
This result prevents  the existence of optimal \emph{spherical shapes},
namely optimal $D^{*}$ of the form $D^{*}=\Omega\cap B_{r(\delta)}(x_{0})$
for suitable $x_{0}$ and $r(\delta)$ such that $|D^{*}|=\delta$.

On the other hand, we have shown that spherical shapes are optimal for 
$\sd(\delta)$, for small $\delta$, when  $\Omega$ is an $N$-dimensional polytope. This,
together with Theorem \ref{thm:convergence}, yields the following result.
\begin{theorem}[{\cite[Thm. 1.7]{mapeve}}]\label{thm:orthotope}
Let $\Omega \subset \R^N$ be a bounded, convex polytope. There exists $\bar\delta>0$ such that, for any $0<\delta< \bar\delta$:
\begin{itemize}
\item $D^*$ is a minimizer of the spectral drop problem
in $\Omega$, with volume constraint $\delta$, if and only if
$D^*=B_{r(\delta)}(x_0)\cap\Omega$, where $x_0$ is a vertex of $\Omega$ with the smallest solid angle;
\item if $|D|=\delta$ and $D$ is not a spherical shape as above, then, for $\beta$ sufficiently large,
\[
\lambda(\beta,D)> \lambda (\beta,B_{r(\delta)}(x_0)\cap \Omega).
\]
\end{itemize}
In particular, in case $\Omega = (0,L_1) \times (0,L_2)$, with $L_1\le L_2$, and $0<\delta< L_1^2/\pi$, then any minimizing
spectral drop is a quarter of a disk centered at a vertex of $\Omega$.
\end{theorem}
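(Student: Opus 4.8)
The plan is to separate the two assertions and to exploit Theorem~\ref{thm:convergence}, which lets one deduce the optimal design statement (second bullet) from a sharp description of the minimizers of the spectral drop problem $\sd(\delta)$ for small $\delta$ (first bullet).

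For the spectral drop problem I would argue by \emph{concentration plus symmetrization}. By the shape regularity theory for the mixed eigenvalue problem one may assume a minimizer $D^*$ of $\sd(\delta)$ is open, connected, and carries a positive first eigenfunction $u^*$. A scaling computation at a vertex $x_0$ whose tangent cone $\mathcal{C}$ has solid angle $\omega$ gives $\mu(B_\rho(x_0)\cap\mathcal{C},\mathcal{C})=\rho^{-2}\mu(B_1\cap\mathcal{C},\mathcal{C})$ and $|B_\rho(x_0)\cap\mathcal{C}|=c_\omega\rho^{N}$, so the spherical shape of volume $\delta$ at $x_0$ has eigenvalue of the form $(\mathrm{const})\,\delta^{-2/N}$, the constant being an increasing function of $\omega$ (equal to $j_{0,1}^{2}\theta/2$ in the plane, $\theta$ the opening angle); hence $\sd(\delta)\le (\mathrm{const})\,\delta^{-2/N}$, the best vertex being the sharpest one, and this value lies strictly below the corresponding Faber--Krahn constant of a half-space (an interior point of a facet) or of $\R^N$ (an interior point). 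A concentration-compactness argument for $u^*$, exploiting this gap, the strict monotonicity of $\sd$ for small volumes, and the fact that connectedness together with the upper bound prevents the mass from spreading over more than one vertex, then forces $D^*$, for $\delta$ small, to lie in a ball of radius $O(\delta^{1/N})$ about a single vertex $x_0$, hence inside the conical neighbourhood where $\Omega=x_0+\mathcal{C}$. On the convex cone $\mathcal{C}$ one finally invokes the Lions--Pacella isoperimetric inequality together with the associated P\'olya--Szeg\H{o} inequality ($\alpha$-symmetrization, $\alpha$ being the solid-angle fraction of $\mathcal{C}$): symmetrizing $u^*$ yields $\mu(D^*,\mathcal{C})\ge\mu(B_{r(\delta)}(x_0)\cap\mathcal{C},\mathcal{C})$ with equality only for spherical caps, so $D^*=B_{r(\delta)}(x_0)\cap\Omega$ with $x_0$ of minimal solid angle, and conversely every such spherical shape is a minimizer.

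To transfer this to $\od(\beta,\delta)$, fix a measurable $D$ with $|D|=\delta$. Testing the Rayleigh quotient of $\lambda(\beta,D)$ on $H^1_0(D,\Omega)$ gives $\lambda(\beta,D)\le\mu(D,\Omega)$ for every $\beta$; in particular, for the spherical shape $D^*$ (a Lipschitz set, and a minimizer by the above), $\lambda(\beta,D^*)\le\mu(D^*,\Omega)=\sd(\delta)$. On the other hand, if $D$ is not equal, up to a null set, to such a spherical shape, then $\liminf_{\beta\to\infty}\lambda(\beta,D)>\sd(\delta)$: normalizing the optimizers by $\int_D u_\beta^2-\beta\int_{\Omega\setminus D}u_\beta^2=1$, one shows $u_\beta$ is bounded in $H^1(\Omega)$ with $\int_{\Omega\setminus D}u_\beta^2\to0$, so (along a subsequence) $u_\beta\rightharpoonup u_\infty\not\equiv0$ with $\int_\Omega u_\infty^2\ge1$ and $u_\infty=0$ a.e.\ on $\Omega\setminus D$; passing to the limit gives $\liminf_\beta\lambda(\beta,D)\ge\int_\Omega|\nabla u_\infty|^2/\int_\Omega u_\infty^2\ge\sd(|\{u_\infty\ne0\}|)\ge\sd(\delta)$, and equality throughout would force $\{u_\infty\ne0\}$, hence $D$, to coincide up to a null set with an $\sd$-minimizer, i.e.\ with a spherical shape, which is excluded. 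Thus for $\beta$ large $\lambda(\beta,D)>\sd(\delta)\ge\lambda(\beta,D^*)$, which is the second bullet; moreover Theorem~\ref{thm:convergence} forces $\od(\beta,\delta)\to\sd(\delta)$, so in fact $\lambda(\beta,D^*)=\od(\beta,\delta)(1+o(1))$.

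When $\Omega=(0,L_1)\times(0,L_2)$ all four vertices are right angles, so the smallest solid angle $\pi/2$ is attained at every corner and the spherical shape there is a quarter disk of radius $r(\delta)=2\sqrt{\delta/\pi}$; making the constants in the concentration and comparison steps explicit in this elementary geometry (requiring the quarter disk to fit inside $\Omega$ and the concentration estimate to be applicable with honest constants) yields the threshold $\bar\delta=L_1^2/\pi$. The main difficulty is the geometric core of the first step: the concentration estimate confining a minimizer of $\sd(\delta)$ to an \emph{exact} conical neighbourhood of a single sharpest vertex for small $\delta$, and the extraction of rigidity from the Lions--Pacella inequality via $\alpha$-symmetrization -- this is precisely where the isoperimetric profile of convex cones enters. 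The second step is comparatively soft once the variational compactness above is in place, and the third is bookkeeping.
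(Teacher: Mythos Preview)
The theorem as stated is not proved in this paper; it is quoted from \cite[Thm.~1.7]{mapeve}. What the paper does provide is an independent, quantitative proof of both bullets in the planar case $N=2$ (Theorems~\ref{thm:main1} and~\ref{thm:main2}), with an explicit threshold $\bar\delta=d^2/(2\alpha_{\min})$, and this is the only argument available here for comparison.

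That planar argument differs from your outline in its architecture. For the first bullet you propose to work directly with the eigenfunction: concentration--compactness to localize a minimizer of $\sd(\delta)$ near a single vertex, then $\alpha$-symmetrization on the tangent cone to conclude. The paper instead routes everything through the \emph{relative isoperimetric problem}: Lemma~\ref{le:isop} classifies the isoperimetric minimizers in the polygon as circular sectors at a sharpest vertex (using the structure theory of \cite{MR3335407,cianchi} to reduce to three explicit configurations, two of which are ruled out by elementary comparisons), and then \cite[Corollary~4.3]{mapeve} converts the identity $K(\Omega,\delta)=\sqrt{\alpha_{\min}/2}$ into $\sd(\delta)=\alpha_{\min}\lambda_1^{\mathrm{Dir}}/(2\delta)$, with the rigidity of the isoperimetric step supplying the ``if and only if''. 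For the second bullet you run a soft compactness argument on the sequence $u_\beta$; the paper instead combines the explicit formula for $\sd(\delta)$ with the two-sided bound of Theorem~\ref{thm:convergence} and the pointwise inequality $\lambda(\beta,D^*)\le\mu(D^*,\Omega)$ from \cite[Lemma~3.1]{mapeve}, obtaining a quantitative rate in $\beta$ rather than a merely qualitative statement. Your route is plausible in general dimension but leaves the concentration step as a genuine black box (as you yourself flag); the paper's route is sharper and more elementary but is specifically two-dimensional, since it rests on the planar classification of relative isoperimetric regions.
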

Then, even though the optimal shapes for $\Lambda(\beta,\delta)$ 
can not be spherical for any fixed $\beta$, they are asymptotically spherical as $\beta\to+\infty$, 
at least in the qualitative sense described in Theorem \ref{thm:orthotope}.

The main aim of the present note is to somehow revert the above point of view: we will show that, in 
case $\sd(\delta)$ is explicit as a function of $\delta$, one can use Theorem \ref{thm:convergence}
in order to obtain quantitative bounds on the ratio
\[
\frac{\od(\beta,\delta)}{\sd(\delta)}.
\]
In particular, we will pursue this program in case $\Omega$ is a planar polygon:
indeed, on the one hand, in such case the threshold $\bar\delta$ in Theorem \ref{thm:orthotope} can 
be estimated explicitly; on the other hand, such theorem implies that the optimal shapes for 
$\sd(\delta)$ are spherical, so that $\sd(\delta)$ can be explicitly computed. This will lead to quantitative 
estimates about the convergence of $\od(\beta,\delta)$ to $\sd(\delta)$.

As a byproduct of this analysis, we will also obtain some quantitative information on the 
ratio 
\[
\frac{\lambda(\beta,B_{r(\delta)}(p)\cap \Omega)}{\od(\beta,\delta)},
\]
thus providing a quantitative version of the second part of Theorem \ref{thm:orthotope}.

These new quantitative estimates are the main results of this note, and they are contained in 
Theorems \ref{thm:main1} and \ref{thm:main2}, respectively. The next section is devoted to their 
statements and proofs, together with further details of our analysis.

\section{Setting of the problem and  main results.}\label{sec:poli}

Let $\Omega\subset\R^2$ denote a convex $n$-gon, $n\ge 3$. We introduce the following quantities and objects, all depending on $\Omega$:
\begin{itemize}
 \item $\alpha_{\min}$ is the smallest interior angle;
 \item $\Vcal_{\min}$ is the set of vertices having angle $\alpha_{\min}$;
 \item $e_1,\dots,e_n$ are the (closed) edges;
 \item $d$ denotes the following quantity: 
\[
d=\min\{\dist(e_i\cap e_j,e_k) : i\neq j,\ i\neq k,\ j\neq k\}.
\]
\end{itemize}
Under the above notation, we define the threshold
\begin{equation}\label{eq:deltabar}
\bar \delta:=\dfrac{d^2}{2\alpha_{\min}}. 
\end{equation}
\begin{remark}
Notice that, as far as $n\ge4$, $d$ corresponds to the shortest distance between two non-
consecutive edges:
\[
d=\min\{\dist(x_{i},x_{j}) : x_{i}\in e_{i}, \, x_{j}\in e_{j},\, e_{i}
\cap e_{j}=\emptyset\}.
\] 
Moreover, for any $n$,
\[
0<\bar\delta <|\Omega|.
\]
Indeed, let $e_i\cap e_j\in\Vcal_{\min}$, with $|e_i|\le|e_j|$. Then
\[
d\le |e_i| \sin \alpha_{\min}\qquad\text{ and }\qquad |\Omega|\ge \frac12 |e_i||e_j| \sin \alpha_{\min},
\]
and the claim follows since $\sin \alpha_{\min} < \alpha_{\min}$.
\end{remark}
Our main results are the following.
\begin{theorem}\label{thm:main1}
Let $\Omega\subset\R^2$ denote a convex $n$-gon, let $\bar\delta$ be defined in \eqref{eq:deltabar}, 
and let us assume that 
\[
0 < \delta < \bar\delta.
\]
Then $\sd(\delta)$ is achieved by $D^{*}$ if and only if $D^*=B_{r(\delta)}(p)\cap 
\Omega$, where $p\in \Vcal_{\min}$. Moreover
\[
\beta > \max\left\{\left(\frac{\delta}{\bar\delta-\delta}\right)^3,1\right\}
\qquad\implies\qquad
(1+\beta^{-1/3})^{-1}\left(1-\beta^{-1/3}\right)^{2}<\frac{\od(\beta,\delta)}{\sd(\delta)}<1.
\]
\end{theorem}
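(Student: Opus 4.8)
The plan is to split the statement into its two halves: the characterization of $\sd(\delta)$-minimizers, and the two-sided bound on the ratio $\od(\beta,\delta)/\sd(\delta)$. For the first half, I would invoke Theorem \ref{thm:orthotope} (with $N=2$), which already asserts that for $\delta$ below some threshold the unique spectral drop minimizers are spherical shapes $B_{r(\delta)}(p)\cap\Omega$ with $p$ a vertex of minimal angle. The real content here is that the abstract $\bar\delta$ of Theorem \ref{thm:orthotope} can be taken to be the explicit quantity $d^2/(2\alpha_{\min})$ from \eqref{eq:deltabar}. So the first step is a geometric verification: when $\delta<\bar\delta$, the radius $r(\delta)$ of the circular sector of opening $\alpha_{\min}$ and area $\delta$ satisfies $r(\delta)=\sqrt{2\delta/\alpha_{\min}}<d$, which by the definition of $d$ guarantees that $B_{r(\delta)}(p)\cap\Omega$ is genuinely a circular sector (the ball meets only the two edges through $p$ and no third edge), so the comparison arguments underlying Theorem \ref{thm:orthotope} apply on this explicit range. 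I would also need to check that the sector at a minimal-angle vertex beats sectors at all other vertices and all other candidate configurations — but this is exactly what Theorem \ref{thm:orthotope} gives once we are in the "small $\delta$" regime, so it reduces to the radius estimate just described.

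For the second half, the upper bound $\od(\beta,\delta)<\sd(\delta)$ is immediate from Theorem \ref{thm:convergence}, together with the fact that the inequality is strict because no fixed-$\beta$ optimal set can be a spherical shape (e.g.\ by the second bullet of Theorem \ref{thm:orthotope}, or more directly because equality in $\od\le\sd$ would force the $\sd$-optimal drop to be $\lambda(\beta,\cdot)$-optimal, which the rectangle obstruction rules out). The lower bound is where the quantitative work lies: I would apply the left inequality of Theorem \ref{thm:convergence}, namely
\[
\sd(\delta+\eps)\left(1-\sqrt{\tfrac{\delta}{\eps\beta}}\right)^2\le\od(\beta,\delta),
\]
and then make the explicit choice $\eps=\beta^{-1/3}\delta$. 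With this choice $\sqrt{\delta/(\eps\beta)}=\beta^{-1/3}$, producing the factor $(1-\beta^{-1/3})^2$. The constraint $\eps<|\Omega|-\delta$ needed in Theorem \ref{thm:convergence} is satisfied for $\beta$ large (since $\eps\to0$), and the constraint $\delta/\beta<\eps$ becomes $\beta^{-1}<\beta^{-1/3}$, i.e.\ $\beta>1$, which is in our hypothesis.

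It then remains to control $\sd(\delta+\eps)$ from below by $\sd(\delta)$ up to the announced factor $(1+\beta^{-1/3})^{-1}$. Here I would use that, by the first half of the theorem, on the range $(0,\bar\delta)$ we have the \emph{explicit} formula $\sd(\delta)=\mu(B_{r(\delta)}(p)\cap\Omega,\Omega)$, a circular sector of opening $\alpha_{\min}$; by scaling, $\sd(\delta)=c(\alpha_{\min})/\delta$ for a constant depending only on $\alpha_{\min}$ (the first Dirichlet–Neumann eigenvalue of a unit sector scales like the reciprocal of the area). Hence $\sd(\delta+\eps)/\sd(\delta)=\delta/(\delta+\eps)=1/(1+\beta^{-1/3})$ exactly — provided $\delta+\eps<\bar\delta$, so that the explicit formula is still valid at $\delta+\eps$. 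This last provision is precisely what the hypothesis $\beta>\bigl(\delta/(\bar\delta-\delta)\bigr)^3$ encodes: it is equivalent to $\beta^{-1/3}\delta<\bar\delta-\delta$, i.e.\ $\delta+\eps<\bar\delta$. Combining the three factors gives
\[
\frac{\od(\beta,\delta)}{\sd(\delta)}\ge\frac{\sd(\delta+\eps)}{\sd(\delta)}\left(1-\beta^{-1/3}\right)^2=(1+\beta^{-1/3})^{-1}\left(1-\beta^{-1/3}\right)^2,
\]
with the inequality strict for the same reason as the upper bound. The main obstacle is the bookkeeping of the three smallness constraints on $\eps$ (equivalently on $\beta$) and verifying that the single choice $\eps=\beta^{-1/3}\delta$ simultaneously satisfies all of them under the stated hypothesis on $\beta$; the scaling identity for $\sd$ on $(0,\bar\delta)$ is the conceptual key that turns Theorem \ref{thm:convergence} into a clean closed-form bound.
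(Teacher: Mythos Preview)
Your argument for the quantitative bound is essentially the paper's: the same choice $\eps=\delta\beta^{-1/3}$ plugged into Theorem~\ref{thm:convergence}, the same scaling identity $\sd(\delta)=c/\delta$ on $(0,\bar\delta)$, and the same bookkeeping showing that the hypothesis $\beta>\max\{(\delta/(\bar\delta-\delta))^3,1\}$ is exactly what is needed for $\eps$ to be admissible and for $\delta+\eps<\bar\delta$.

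The one place where your route diverges is the first half. You treat Theorem~\ref{thm:orthotope} as a black box and argue that its abstract threshold may be taken equal to $d^2/(2\alpha_{\min})$ because the radius check $r(\delta)=\sqrt{2\delta/\alpha_{\min}}<d$ forces $B_{r(\delta)}(p)\cap\Omega$ to be a genuine sector. That check only shows the candidate is a sector; it does not by itself rule out competitors, so you are implicitly relying on the internal structure of the proof in \cite{mapeve}. The paper instead makes this step self-contained: it proves a relative isoperimetric lemma (Lemma~\ref{le:isop}) directly for the polygon with the explicit threshold $\bar\delta$, using the structure results of \cite{MR3335407,cianchi} to reduce to three configurations and eliminate two of them, and then invokes \cite[Corollary~4.3]{mapeve} (an $\alpha$-symmetrization argument) to pass from the isoperimetric characterization to the spectral-drop characterization and to the explicit formula $\sd(\delta)=\alpha_{\min}\lambda_1^{\text{Dir}}/(2\delta)$. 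Your route is shorter if one is willing to reopen the proof of Theorem~\ref{thm:orthotope}; the paper's route is cleaner as a standalone argument and simultaneously delivers the exact scaling constant you need.

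A minor remark: your justification of strictness in the upper bound (``the rectangle obstruction rules out spherical optima'') does not apply to a general polygon; the result of \cite{llnp} is stated for rectangles. The paper's own proof in fact only records non-strict inequalities in \eqref{eq:fineprimo}, so this point is not settled there either.
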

By taking advantage of the asymptotic information on $\od(\beta,\delta)/\sd(\delta)$, we can deduce the corresponding relation between the eigenvalue of a spherical shape and the minimum $\od(\beta,\delta)$.  
\begin{theorem}\label{thm:main2}
Let $\Omega\subset\R^2$ denote a convex $n$-gon, $\beta>1$, 
and let us assume that 
\begin{equation}\label{eq:assdelta}
\delta <\frac{\beta^{1/3}}{\beta^{1/3}+1}\, \bar \delta,
\end{equation}
where $\bar\delta$ is defined in \eqref{eq:deltabar}. Then, taking $p\in \Vcal_{\min}$ and
$r(\delta)$ such that $|B_{r(\delta)}(p)\cap 
\Omega|=\delta$,
\[
1 < \frac{\la(\beta,B_{r(\delta)}(p)\cap \Omega)}{\Lambda(\beta,\delta)}< \left(1+\beta^{-\frac{1}{3}}\right)\left(1-\beta^{-\frac{1}{3}}\right)^{-2}.
\]
\end{theorem}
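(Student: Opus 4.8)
The plan is to derive Theorem~\ref{thm:main2} from Theorem~\ref{thm:main1} together with Theorem~\ref{thm:convergence}, essentially by a chain of inequalities. First observe that under the hypothesis \eqref{eq:assdelta}, i.e. $\delta < \frac{\beta^{1/3}}{\beta^{1/3}+1}\bar\delta$, a short algebraic manipulation shows that $\beta > \left(\frac{\delta}{\bar\delta-\delta}\right)^3$, so that (since also $\beta>1$) the hypothesis of Theorem~\ref{thm:main1} is met. In particular $\sd(\delta)$ is achieved exactly by spherical shapes $B_{r(\delta)}(p)\cap\Omega$ with $p\in\Vcal_{\min}$, and we have the two-sided bound $(1+\beta^{-1/3})^{-1}(1-\beta^{-1/3})^2 < \od(\beta,\delta)/\sd(\delta) < 1$.

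Next I would use that, by definition \eqref{eq:def_od}, $\od(\beta,\delta) = \min\{\lambda(\beta,D):|D|=\delta\}$, so trivially $\lambda(\beta,B_{r(\delta)}(p)\cap\Omega)\ge \od(\beta,\delta)$. The point is to show the inequality is \emph{strict} and to quantify it from above. For the upper bound, the key observation is that the competitor $u$ realizing $\mu(B_{r(\delta)}(p)\cap\Omega,\Omega)$, extended by zero, is admissible in the Rayleigh quotient defining $\lambda(\beta, B_{r(\delta)}(p)\cap\Omega)$ — this is exactly the inequality $\lambda(\beta,D)\le\mu(D,\Omega)$ used in the proof of Theorem~\ref{thm:convergence}. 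Since $D^*=B_{r(\delta)}(p)\cap\Omega$ is optimal for $\sd$, $\mu(D^*,\Omega)=\sd(\delta)$, hence $\lambda(\beta,B_{r(\delta)}(p)\cap\Omega)\le\sd(\delta)$. Combining with the lower bound $\od(\beta,\delta)>(1+\beta^{-1/3})^{-1}(1-\beta^{-1/3})^2\,\sd(\delta)$ from Theorem~\ref{thm:main1} gives
\[
\frac{\la(\beta,B_{r(\delta)}(p)\cap\Omega)}{\od(\beta,\delta)}<\frac{\sd(\delta)}{(1+\beta^{-1/3})^{-1}(1-\beta^{-1/3})^2\,\sd(\delta)}=\left(1+\beta^{-1/3}\right)\left(1-\beta^{-1/3}\right)^{-2},
\]
which is the claimed upper bound.

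For the lower bound $1 < \lambda(\beta,B_{r(\delta)}(p)\cap\Omega)/\od(\beta,\delta)$, I would invoke the second bullet of Theorem~\ref{thm:orthotope}, or rather re-derive the strictness directly: Theorem~\ref{thm:main1} asserts that $\sd(\delta)$ is achieved \emph{only} by spherical shapes, and (by the same source, \cite{mapeve}) the optimizer $D^\beta$ for $\od(\beta,\delta)$ is never a spherical shape for finite $\beta$ — so $D^\beta\ne B_{r(\delta)}(p)\cap\Omega$, and by uniqueness of the minimizer in \eqref{eq:def_od} one gets $\lambda(\beta,B_{r(\delta)}(p)\cap\Omega) > \od(\beta,\delta)$. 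Alternatively, and more self-containedly, strictness follows simply because $\od(\beta,\delta)<\sd(\delta)$ (strict, by Theorem~\ref{thm:main1}) while $\lambda(\beta,B_{r(\delta)}(p)\cap\Omega)$ — hmm, this would only give $\lambda(\beta,D^*)\le\sd(\delta)$, not a lower bound against $\od$, so the cleanest route really is the non-optimality of spherical shapes for $\od$.

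The main obstacle I anticipate is the lower bound: getting $>1$ rather than $\ge 1$ requires knowing that the spherical shape is strictly suboptimal for $\lambda(\beta,\cdot)$ at \emph{every} finite $\beta$ in the relevant range, which is precisely the content of the second bullet of Theorem~\ref{thm:orthotope} but is there stated only for ``$\beta$ sufficiently large''; one must check that the hypothesis \eqref{eq:assdelta} forces $\beta$ into the regime where that strict inequality is valid, or else argue the strictness afresh using that $\Lambda(\beta,\delta)$ is attained and the characterization of its minimizers. Everything else — the algebra showing \eqref{eq:assdelta} $\implies$ $\beta>(\delta/(\bar\delta-\delta))^3$, and the assembly of the chain of inequalities — is routine.
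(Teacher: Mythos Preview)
Your approach is essentially the same as the paper's: one uses $\lambda(\beta,D^*)\le\mu(D^*,\Omega)=\sd(\delta)$ (the paper cites \cite[Lemma~3.1]{mapeve} for this, while you give the direct Rayleigh-quotient argument) and then divides by $\od(\beta,\delta)$, invoking the lower bound from Theorem~\ref{thm:main1}. Your concern about the strict lower bound $1<\cdot$ is legitimate; in fact the paper's own displayed chain in the proof reads $1\le\cdot$, so the strictness in the statement is not actually justified there either, and your discussion of what extra input (non-optimality of spherical shapes for $\od(\beta,\cdot)$) would be needed is on point.
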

To prove our results, we will use the analysis we developed in \cite[Section~4]{mapeve} to estimate 
$\sd(\delta)$ by means of $\alpha$-symmetrizations on cones \cite{MR876139,MR963504}. To this aim we 
will first evaluate a suitable isoperimetric constant. 

For $D\subset\Omega$, we write
\[
\Rcal(D,\Omega):=\frac{P(D,\Omega)}{2|D\cap\Omega|^{1/2}},
\]
where $P$ denotes the relative De Giorgi perimeter. For $0<\delta<|\Omega|$ we consider 
the isoperimetric problem
\[
I(\Omega,\delta) := \inf\left\{\Rcal(D,\Omega) : D\subset\Omega,\ |D|= \delta\right\},
\]
and we call \[
K(\Omega,\delta) = \inf_{0<\delta'\le\delta} I(\Omega,\delta').
\]
Given the unbounded cone with angle $\alpha$,
\[
\Sigma_\alpha :=\{(r\cos\vartheta,r\sin\vartheta)\in\R^2 : 0<\vartheta<\alpha,\; r>0\},
\]
it is well known that
\begin{equation}\label{eq:sector}
I(\Sigma_\alpha,\alpha r^2/2)=\Rcal(B_r(0)\cap\Sigma_\alpha,\Sigma_\alpha) = \frac{\alpha r}{2|\alpha r^2/2|^{1/2}} =
\sqrt{\frac{\alpha}{2}},
\end{equation}
is independent on $r$, and hence on $\delta=|B_r(0)\cap\Sigma_\alpha|$. As a consequence, also
\[
K(\Sigma_\alpha,\delta)=\sqrt{\frac{\alpha}{2}},
\]
for every $\delta$.
\begin{lemma}\label{le:isop}
If $\Omega\subset\R^2$ is a convex $n$-gon and $\delta < \bar \delta$, then $I(\Omega,\delta)$ is achieved by $D^*$ if and only if $D^*=B_{r(\delta)}(p)\cap  \Omega$, where $p\in \Vcal_{\min}$. Moreover $K(\Om,\bar \delta)$ is achieved by the same $D^*$ too.
\end{lemma}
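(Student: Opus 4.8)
The plan is to reduce the isoperimetric problem in the polygon $\Omega$ to the model problem on a cone $\Sigma_{\alpha_{\min}}$, whose answer is recorded in \eqref{eq:sector}. First I would observe that, since $\delta<\bar\delta=d^2/(2\alpha_{\min})$, any competitor $D\subset\Omega$ with $|D|=\delta$ has diameter small compared with $d$ in the following precise sense: a nearly-optimal $D$ must be concentrated near a single point of $\overline\Omega$, and its ``interaction with the boundary'' can touch at most one edge, or a pair of consecutive edges meeting at a vertex. The reason is that a connected competitor of the given (small) measure cannot simultaneously reach two non-consecutive edges, since these are at distance at least $d$, and one checks via the definition of $\bar\delta$ that a set of measure $\delta<\bar\delta$ has insufficient ``reach''; disconnected competitors are handled by noting that splitting mass only increases $\Rcal$, since $P(D,\Omega)/|D|^{1/2}$ is not improved by taking disjoint unions (by the concavity of $\sqrt{\cdot}$ the single-piece configuration dominates). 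Thus it suffices to compare, for each vertex $v$ with interior angle $\alpha_v$, the ratio $\Rcal(D,\Omega)$ for sets $D$ localized at $v$ with the cone value $\sqrt{\alpha_v/2}$.

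The second step is the local comparison at a fixed vertex $v$ of angle $\alpha_v$. Near $v$ the domain $\Omega$ coincides with the cone $\Sigma_{\alpha_v}$ (after a rigid motion), so for $D$ supported in a neighbourhood of $v$ contained in $\Omega\cap\Sigma_{\alpha_v}$ one has $P(D,\Omega)=P(D,\Sigma_{\alpha_v})$ and $|D\cap\Omega|=|D\cap\Sigma_{\alpha_v}|$, whence $\Rcal(D,\Omega)=\Rcal(D,\Sigma_{\alpha_v})\ge I(\Sigma_{\alpha_v},|D|)=\sqrt{\alpha_v/2}$ by \eqref{eq:sector}, with equality exactly for the circular sector $B_r(v)\cap\Sigma_{\alpha_v}=B_r(v)\cap\Omega$. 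For $D$ localized at an interior point or along the relative interior of an edge one gets instead the comparison with a half-plane or the whole plane, giving $\Rcal(D,\Omega)\ge\sqrt{\pi/2}$ or $\ge\sqrt{2\pi}$ respectively; since $\alpha_v<\pi$ for a convex polygon, these are strictly worse. Minimizing $\sqrt{\alpha_v/2}$ over vertices $v$ selects precisely $\alpha_v=\alpha_{\min}$, i.e. $v\in\Vcal_{\min}$, and identifies the minimizers as $D^*=B_{r(\delta)}(p)\cap\Omega$ with $p\in\Vcal_{\min}$. For this last conclusion the admissibility constraint $r(\delta)\le d/\sqrt{\text{(something)}}$, i.e. that the ball $B_{r(\delta)}(p)$ really stays inside the ``cone part'' of $\Omega$ near $p$, must be checked: since $\delta=\alpha_{\min}r(\delta)^2/2<\bar\delta=d^2/(2\alpha_{\min})$ we get $r(\delta)<d/\alpha_{\min}\le d/\sin\alpha_{\min}$, which is exactly the condition guaranteeing $B_{r(\delta)}(p)$ meets only the two edges through $p$.

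For the final assertion about $K(\Omega,\bar\delta)=\inf_{0<\delta'\le\bar\delta}I(\Omega,\delta')$, I would note that by the first two steps, for every $\delta'\le\bar\delta$ (hence in particular for $\delta'<\bar\delta$, and by a limiting argument for $\delta'=\bar\delta$) we have $I(\Omega,\delta')=\sqrt{\alpha_{\min}/2}$, attained by $B_{r(\delta')}(p)\cap\Omega$ with $p\in\Vcal_{\min}$. Since this common value is independent of $\delta'$, the infimum defining $K$ equals $\sqrt{\alpha_{\min}/2}$ and is attained at $\delta'=\bar\delta$ by $D^*=B_{r(\bar\delta)}(p)\cap\Omega$, the same type of set. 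The main obstacle I anticipate is the rigorous ``localization'' argument in the first step — proving that a minimizing sequence, or a minimizer, for $I(\Omega,\delta)$ with $\delta<\bar\delta$ cannot see two far-apart edges and must collapse to a single vertex/edge/interior regime — since this requires a careful quantitative use of the definition of $d$ and $\bar\delta$ together with the relative isoperimetric inequality in $\Omega$ (or a covering argument partitioning $\Omega$ into pieces each isometric to a subset of a cone, a half-plane, or the plane); once this is in place, the comparison with \eqref{eq:sector} and the identification of equality cases are essentially immediate.
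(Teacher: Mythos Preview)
Your overall strategy --- compare competitors with the cone $\Sigma_{\alpha_v}$, the half-plane, or the full plane, and then minimize over vertices --- is exactly the paper's. The treatment of cases localized at a vertex or along a single edge, and the final argument for $K(\Omega,\bar\delta)$, are also essentially the same.

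The genuine gap is in your first step, and you correctly flag it as the obstacle but do not resolve it. The assertion that ``a connected competitor of the given (small) measure cannot simultaneously reach two non-consecutive edges'' is false as stated: small area does not bound diameter, so a thin connected set of measure $\delta<\bar\delta$ can perfectly well touch two edges at distance $\ge d$. What one needs is not that such sets are impossible, but that they have large \emph{relative perimeter}; and for a generic finite-perimeter competitor this is not obvious. The paper sidesteps the issue entirely by invoking the existence and regularity theory for relative isoperimetric minimizers in convex bodies (Ritor\'e--Vernadakis and Cianchi): the minimizer $D^*_\delta$ exists, is connected, and $\partial D^*_\delta\cap\Omega$ is a \emph{single} circular arc or segment meeting $\partial\Omega$ orthogonally at exactly two flat boundary points. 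This structural result reduces everything to three explicit cases: (A) the two endpoints lie on consecutive edges, (B) on the same edge, (C) on non-consecutive edges. In case C the free boundary is then a curve joining two non-consecutive edges, so its length is at least $d$, giving
\[
\Rcal(D^*_\delta,\Omega)\ \ge\ \frac{d}{2\sqrt{\delta}}\ >\ \sqrt{\frac{\alpha_{\min}}{2}}
\]
precisely because $\delta<\bar\delta=d^2/(2\alpha_{\min})$. This is where $\bar\delta$ actually enters, and it is a perimeter bound, not a reach bound. Without the structure theorem your localization/covering idea would have to produce, for an arbitrary finite-perimeter set touching two non-consecutive edges, a lower bound on $P(D,\Omega)$ of order $d$, and no such mechanism is supplied.
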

\begin{proof}
Notice that, by assumption, for any $p\in \Vcal_{\min}$ the set $D=B_{r(\delta)}(p)\cap 
\Omega$ is a circular sector of measure $\delta$, with $\partial D \cap \Omega$ a circular arc. Then \eqref{eq:sector}
implies
\begin{equation}\label{eq:lemmino}
I(\Omega,\delta) \le I(\Sigma_{\alpha_{\min}},\delta) = \sqrt{\frac{\alpha_{\min}}{2}},
\end{equation}
and we are left to show the opposite inequality (strict, in case $D$ is not of the above kind). 
Applying Theorems 4.6 and 5.12 in \cite{MR3335407}, and
Theorems 2 and 3 in \cite{cianchi}, we deduce that $I$ is
achieved by $D^*_\delta\subset\Omega$, which is an
open, connected set, such that $\Gamma:=\partial D^*_\delta\cap \Om$ is
either a (connected) arc of circle or a straight line segment. Moreover, 
$\partial D^*_\delta\cap \partial\Omega$ consists in exactly two points (the endpoints of $\Gamma$), 
and $\partial D^*_\delta\cap \Om$ reaches the boundary of $\Om$ orthogonally
at flat points (i.e. not at a vertex). Hence, there are three possible configurations (see Fig. 
\ref{fig:HO}). 
\begin{enumerate}
\item[A.] 
The endpoints of $\Gamma$ belong to the interior of two consecutive edges $e_i$ and $e_{i+1}$.
In this case $\Gamma$ is orthogonal to both $e_i$ and $e_{i+1}$, and $D^*_\delta$ is a portion of a 
disk centered at $e_i\cap e_{i+1}$. Recalling \eqref{eq:sector}, we deduce that $e_i\cap e_{i+1}
\in \Vcal_{\min}$, and the lemma follows.
\item[B.]
The endpoints of $\Gamma$ belong to the same edge $e_i$.
\item[C.]
The endpoints of $\Gamma$ belong to two non-consecutive edges.
\end{enumerate}
The rest of the proof will be devoted to show that cases B and C can not
occur.

In case B, assume w.l.o.g. that $e_i\subset\{(x,0)\in\R^2\}$ and $\Omega\subset\{(x,y)\in\R^2:
y\ge0\}=\Sigma_\pi$. Then $D^*_\delta\cap\Omega = D^*_\delta\cap\Sigma_\pi$, $P(D^*_\delta,\Omega) = P(D^*_\delta,\Sigma_\pi)$, and 
\[
\Rcal(D^*_\delta,\Omega) \ge I(D^*_\delta,\Sigma_\pi) = \sqrt\frac{\pi}{2} > \sqrt\frac{\alpha_{\min}}{2},
\]
in contradiction with \eqref{eq:lemmino}.

Finally, in order to rule out configuration C, by definition of $d$ we have
\[
{\mathcal R}(D^{*}_\delta,\Om)\geq \frac{d}{2\sqrt{|D^{*}_\delta|}}=\frac{d}{2\sqrt{\delta}}> \sqrt\frac{\alpha_{\min}}{2}
\]
whenever $\delta< \bar \delta$, which is fixed as $d/2\alpha_{\min}$.
So that we get again a contradiction concluding the proof.

Finally, the assertion concerning $K(\Om,\bar \delta)$ follows by its definition and from the fact that for all $\delta\leq \bar \delta$ (see also~\cite[Corollary~4.3]{mapeve}), we have just showed that $I(\Om,\delta)=\sqrt{\frac{\alpha}{2}}$ is a constant independent of $\delta$.
\end{proof}
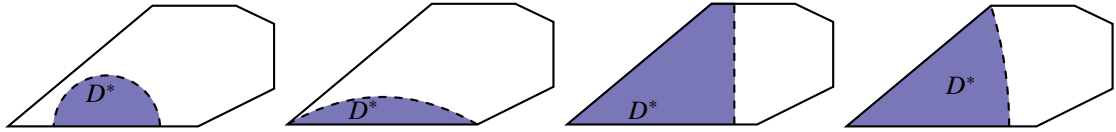
\begin{figure}[ht]
\begin{center}
\hfill
\begin{tikzpicture}
\draw[draw={\grigetto}, fill={\grigetto}] (0,0) -- (2,0) arc (2:178:0.7) -- cycle;
\draw[draw={\grigetto}, fill={\grigetto}] (20:1.3) node {$D^*$};
\draw[thick, dashed] (2,0) arc (2:178:0.7);
\draw[thick] (0,0) -- (1.9,1.6) --  (3,1.6) -- (3.5,1.36) -- (3.5,0.5) -- (2.5,0) -- cycle;
\end{tikzpicture}
\hfill
\begin{tikzpicture}
\draw[draw={\grigetto}, fill={\grigetto}](0,0) to [out=30, in=150] (2.5,0);
\node at (1,0.2) {$D^{*}$};
\draw[thick, dashed]  (0,0) to [out=30, in=150] (2.5,0);
\draw[thick] (0,0) -- (1.9,1.6) --  (3,1.6) -- (3.5,1.36) -- (3.5,0.5) -- (2.5,0) -- cycle;
\end{tikzpicture}
\hfill
\begin{tikzpicture}
\draw[draw={\grigetto}, fill={\grigetto}](0,0) -- (1.9,1.61)--(1.9,1.61)--(2.2,1.61)-- (2.2,1.6) --(2.2,0);

\node at (1,0.2) {$D^{*}$};
\draw[thick, dashed] (2.2,0) -- (2.2,1.6);
\draw[thick] (0,0) -- (1.9,1.6) --  (3,1.6) -- (3.5,1.36) -- (3.5,0.5) -- (2.5,0) -- cycle;
\end{tikzpicture}
\hfill
\begin{tikzpicture}
\draw[draw={\grigetto}, fill={\grigetto}] (0,0) -- (2.14,0) arc (0:17:5.45) -- cycle;
\draw[draw={\grigetto}, fill={\grigetto}] (20:1.6) node {$D^*$};
\draw[thick, dashed] (2.14,0) arc (0:17:5.45);
\draw[thick] (0,0) -- (1.9,1.6) --  (3,1.6) -- (3.5,1.36) -- (3.5,0.5) -- (2.5,0) -- cycle;
\end{tikzpicture}
\hfill
\caption{some possibilities for cases B (on the left) and C (on the right) in the proof of Lemma \ref{le:isop}.  The Dirichlet boundary $\partial D^*\cap\Omega$ is dashed.
\label{fig:HO}}
\end{center}
\end{figure}
\begin{remark}
Notice that the threshold $\bar\delta$ in Lemma~\ref{le:isop} has no reason to be optimal. On the 
other hand, one can easily check that in the case of a rectangle, as treated in Theorem 
\ref{thm:orthotope} it is actually optimal, since, for $\delta>\bar\delta$, $I(\Omega,\delta)$ 
is achieved by a rectangle (see e.g. \cite[Remark 4.5]{mapeve}).
\end{remark}
We are now in position to prove our main results.
\begin{proof}[Proof of Theorem~\ref{thm:main1}]
First of all, we take $\eps\in (\delta/\beta,\bar \delta-\delta)\not=\emptyset$ by the assumption on $\delta$ and we apply \cite[Corollary~4.3]{mapeve} and Lemma~\ref{le:isop} to deduce that
\[
\begin{split}
\sd(\delta)&=K^2(\Om,\delta)\delta^{-1}\lambda_{1}^{\text{Dir}}=\alpha_{\min}(2\delta)^{-1}\lambda_{1}^{\text{Dir}}\\
\sd(\delta+\eps)&=K^2(\Om,\delta+\eps)(\delta+\eps)^{-1}\lambda_{1}^{\text{Dir}}=\alpha_{\min}[2(\delta+\eps)]^{-1}\lambda_{1}^{\text{Dir}},
\end{split}
\] 
where
$\lambda_{1}^{\text{Dir}}$ stands for the first eigenvalue of the Dirichlet-Laplacian in the ball of unit radius.
By Theorem \ref{thm:convergence} we obtain
\[
1\geq \frac{\od(\beta,\delta)}{\sd(\delta)}\geq \frac{\sd(\delta+\eps)}{\sd(\delta)}\left(1-\sqrt{\frac{\delta}{\eps\beta}}\right)^{2}
=\frac{\delta}{\delta+\eps}\left(1-\sqrt{\frac{\delta}{\eps\beta}}\right)^{2},
\]
for all $\eps\in(\delta/\beta,\bar \delta-\delta)$.
Then we make the choice of $\eps=\delta/\beta^{1/3}$, which is admissible since
$\beta>1$ and $\delta<\beta^{1/3}\bar{\delta}/(1+\beta^{1/3})$,
and obtain 
\begin{equation}\label{eq:fineprimo}
1\geq \frac{\od(\beta,\delta)}{\sd(\delta)}\geq \frac{1}{1+\beta^{-1/3}}\left(1-\beta^{-1/3}\right)^2,
\end{equation}
yielding the conclusion.
\end{proof}
\begin{proof}[Proof of Theorem~\ref{thm:main2}]
Calling $D^*=B_{r(\delta)}(p)\cap \Omega$, for some $p\in \Vcal_{\min}$ and using conclusion 2 of~\cite[Lemma~3.1]{mapeve}, we infer that  $\la(\beta,D^*)\le \mu(D^*,\Om)$. 
As a consequence we can use Theorem \ref{thm:main1} to write
\[
1\leq \frac{\la(\beta,D^*)}{\Lambda(\beta,\delta)}\leq \frac{M(\delta)}{\Lambda(\beta,\delta)}\leq (1+\beta^{-1/3})\left(1-\beta^{-1/3}\right)^{-2}.\qedhere
\]
\end{proof}

\begin{remark}
The estimate of Theorem~\ref{thm:main2} can be read as, 
\[
1\leq \frac{\la(\beta,D^*)}{\Lambda(\beta,\delta)}\leq 1+3\beta^{-1/3}+o(\beta^{-1/3}),\qquad \text{as }\beta\rightarrow\infty.
\]
On the other hand, even without using asymptotic expansions, as $\beta$ increases, the estimate becomes more precise. As an example, for all $\beta>8$, one has the explicit estimate\[
1\leq \frac{\la(\beta,D^*)}{\Lambda(\beta,\delta)}\leq 1+15\beta^{-1/3}+14\beta^{-2/3}.
\]
\end{remark}

\section*{Acknowledgments} 
Work partially supported by the project ERC Advanced Grant 2013 n. 339958:
``Complex Patterns for Strongly Interacting Dynamical Systems - COMPAT'', by the PRIN-2015KB9WPT Grant:
``Variational methods, with applications to problems in mathematical physics and geometry'', and by the INdAM-GNAMPA group.


\medskip
\small
\begin{flushright}
\noindent \verb"dariocesare.mazzoleni@unicatt.it"\\
Dipartimento di Matematica e Fisica ``N. Tartaglia'', Universit\`a Cattolica -- Brescia\\
via dei Musei 41, 25121 Brescia, Italy\\
\medskip
\noindent \verb"benedetta.pellacci@unicampania.it"\\
Dipartimento di Matematica e Fisica,
Universit\`a della Campania  ``Luigi Vanvitelli''\\
viale A. Lincoln 5, Caserta, Italy\\
\medskip
\noindent \verb"gianmaria.verzini@polimi.it"\\
Dipartimento di Matematica, Politecnico di Milano\\
piazza Leonardo da Vinci 32, 20133 Milano, Italy\\
\end{flushright}

\end{document}